\pdfoutput=1
\documentclass[10pt,a4paper]{article}
\usepackage{a4wide}
\usepackage{amsmath, amssymb, amsthm}
\usepackage[utf8]{inputenc}
\usepackage[english]{babel}
\usepackage[numbers]{natbib}
\usepackage{url}
\usepackage[usenames]{color}
\usepackage[colorlinks=true,
linkcolor=webgreen,
filecolor=webbrown,
citecolor=webgreen]{hyperref}
\definecolor{webgreen}{rgb}{0,.5,0}
\definecolor{webbrown}{rgb}{.6,0,0}
\definecolor{red}{rgb}{1,0,0}
\usepackage{cleveref}
\usepackage{tabularx}
\usepackage{breqn}
\usepackage[title]{appendix}
\theoremstyle{plain}

\newtheorem{lemma}{Lemma}[section]

\newtheorem{theorem}{Theorem}[section]
\newtheorem*{theorem*}{Theorem}

\crefname{conjecture}{Conjecture}{Conjectures}
\crefname{theorem}{Theorem}{Theorems}
\crefname{theorem*}{Theorem}{Theorems}
\crefname{corollary}{Corollary}{Corollaries}
\crefname{lemma}{Lemma}{Lemmas}
\crefname{proposition}{Proposition}{Propositions}
\crefname{remark}{Remark}{Remarks}
\crefname{note}{Note}{Notes}
\crefname{todo}{TODO}{TODOs}
\crefname{definition}{Definition}{Definitions}
\crefname{notation}{Notation}{Notations}
\crefname{example}{Example}{Examples}
\crefname{question}{Question}{Questions}
\crefname{section}{\S}{Sections}
\crefname{equation}{Equation}{Equations}
\crefformat{equation}{(eq.~#2#1#3)}

\newcommand{\floor}[1]{\left\lfloor #1 \right\rfloor}

\newcommand{\HW}{\operatorname{HW}}

\newcommand{\Z}{\mathbb{Z}}

\DeclareMathOperator{\DTIME}{DTIME}
\DeclareMathOperator{\ELEMENTARY}{ELEMENTARY}

\title{How many points does an affine algebraic set have in residue classes modulo $n$ ?}
\author{Mihai Prunescu \footnote{Research Center for Logic, Optimization and Security (LOS), Faculty of Mathematics and Computer Science, University of Bucharest, Academiei 14, Bucharest (RO-010014), Romania.} \footnote{ Simion Stoilow Institute of Mathematics of the Romanian Academy, Research unit 5, P. O. Box 1-764, Bucharest (RO-014700), Romania. E-mail: {\tt mihai.prunescu@imar.ro}} 
}

\begin{document}
\date{}
\maketitle

\begin{abstract} \noindent
We show that for every uniform family of affine algebraic sets, there is a formula in the parameters of the family and in $n$, expressing the cardinality of the set in the ring  $\mathbb Z / n \mathbb Z$. In particular, we construct such a formula for the elliptic curves in Weierstra{\ss }  normal form.  These formulas are arithmetic terms: fixed finite compositions of additions, multiplications, subtractions, divisions with remainder and integer exponentiations.
\\[2mm]
{\bf 2020 Mathematics Subject Classification:} 11G25 (primary), 11D72 (secondary). \\[2mm]
{\bf Keywords:} Kalmar elementary function, ring of remainder classes, affine varieties, elliptic curve, Diophantine equations, Weierstra{\ss } normal form.
\end{abstract}

\section{Introduction}
The \textbf{Kalmar functions}, also known as \textbf{elementary functions} and denoted below as $\ELEMENTARY$, are the sequences of non-negative integers which can be deterministically computed in iterated exponential time (Marchenkov \cite[Introduction]{marchenkov2007superposition}):
\begin{align*}
\ELEMENTARY =  \bigcup_{\textrm{all heights}}\DTIME\left (2^{2^{\cdot^{\cdot^{\cdot^{\cdot ^{2^{2^m}}}}}}} \right ) ,
\end{align*}
where $m$ is the length of the input. Here $\DTIME(g(m))$ means the class of functions $f:\mathbb N^k \rightarrow \mathbb N$ which are computed by determinist Turing machines in a number of steps bounded by $g(m)$, where $m$ is the length of the input. Any Kalmar elementary function is computable in deterministic time bounded by a fixed number of iterated exponential functions. Mazzanti \cite{mazzanti2002plainbases} showed that Kalmar functions $f : \mathbb N^k \rightarrow \mathbb N$ are expressible as arithmetic terms. An arithmetic term can be obtained by composition and substitution using the elementary arithmetic operations:
$$x + y, x \dot{-} y, x \cdot y, \left \lfloor \frac{x}{y} \right \rfloor, 2^x,$$
where $x\dot{-}y$ is the arithmetic difference defined by $x \dot{-} y = 0$ when $x < y$ and $x \dot{-}y = x - y$ if $x \geq y$.  A proof, which is similar with Mazzanti's, was given by Marchenkov, \cite{marchenkov2007superposition}. These results were applied by Prunescu and Sauras-Altuzarra to construct terms for some number-theoretic functions, \cite{prunescu2024numbertheoreticfunctions}. It is known that the exponentials $k^x$ and the two-variable exponentiation $x^y$ are also expressible as arithmetic terms in these functions, so these exponentiations can also be used in the term construction.

Many interesting functions have been represented by arithmetic terms. Two notable examples are the prime counting function $\pi(n)$ and the $n$-th prime number $p(n)$, Prunescu and Shunia \cite{prunescushuniaprimes}. For many other applications, Istrate, Prunescu and Shunia \cite{istrateprunescushuniauniversal} and further papers cited therein. 

We apply this technique to show that for every family of algebraic sets defined with integer coefficients and parameters $\vec \lambda \in \mathbb Z$ there is an arithmetic term $f(\vec \lambda, n)$ giving the number of points of the set defined in $(\mathbb Z / n \mathbb Z)^k$. We treat also the particular case of the elliptic curve. This is the subset of $(\mathbb Z / n \mathbb Z)^2$
$$x_2^2 = x_1^3 + Ax_1 + B$$
with parameters $A, B \in \mathbb Z$. Its cardinality will be expressed by an arithmetic term $f(A, B, n)$. 

\section{Preliminaries and definitions}\label{section2}
In this section, we display various arithmetic terms that are used in our results. The main result of the section is that the number of solutions of an exponential Diophantine equation $P(x_1, \dots, x_k) = 0$ in the cube $\{0, 1, \dots, t-1\}^k$ is represented by an arithmetic term in the coefficients of $P(\vec x)$ and in $t$. This {\bf hypercube method} has been first applied by Mazzanti \cite{mazzanti2002plainbases}, then by Marchenkov \cite{marchenkov2007superposition}. It has been also applied in different papers by the author, for example in \cite{prunescu2024numbertheoreticfunctions}. In this section we show how the method works, without some details.

\subsection{Number theoretic arithmetic terms}
The following number theoretic arithmetic terms have been gathered from various sources \cite{robinson1952arithmetic, mazzanti2002plainbases, marchenkov2007superposition, prunescushuniagcd}:
\begin{align*}
\gcd(a,b) &= \left (\floor{\frac{5^{ab(ab + a+b)}}{(5^{a^2b}-1)(5^{b^2a}-1)}} \bmod 5^{ab} \right ) \dot{-} 1, \\
\nu_2(n) &= \floor{\frac{\gcd(n, 2^n)^{n+1} \bmod (2^{n+1}-1)^2}{2^{n+1}-1}} , \\
\binom{n}{k} &= \floor{\frac{(2^n+1)^n}{2^{nk}}} \bmod 2^n , \\
\HW(n) &= \nu_2 \left( \binom{2n}{n} \right).
\end{align*}
Here, $\nu_2(n)$ represents the $2$-adic order of $n$, which is highest exponent of $2$ dividing $n$. $\HW(n)$ denotes the Hamming weight of $n$, which is the number of ones in the binary representation of $n$. We consider $a, b, n, k > 0$ to avoid division by zero. On the other hand, arithmetic terms are considered to be totally defined functions, and such cases can be solved by adopting various conventions.

\subsection{Generalized geometric progressions} \label{subsection:geoemtricprogressions}
Consider $a,r,b \in \Z^+$ such that $a > 1$, $r \geq 0$ and $b \geq 0$. The arithmetic term for the geometric progression
\begin{align*}
G_0(a,b) = \sum_{j=0}^b a^j = \frac{a^{b+1}-1}{a-1}
\end{align*}
is well-known. Perhaps lesser known, are the \textbf{generalized geometric progressions of the $r$-th kind}, which are defined as
\begin{align*}
G_r (a, b) = \sum_{j=0}^b j^r a^j .
\end{align*}
As described by Matiyasevich in the appendix of \cite{matiyasevich1993hilbert}, $G_r (q, t)$ can be calculated via the formula
\begin{align} \label{TermG}
 G_{r + 1} ( a , b ) = \frac{\partial}{\partial a} G_r ( a , b + 1 ) - \sum_{j=0}^r \binom{r+1}{j} G_j ( a , b ) .
\end{align} 
For example, 
 $$ G_1(a,b) =\dfrac{b a^{t+2} - (b+1)a^{b + 1} + a}{( a - 1 )^2} , $$ $$ G_2(a,b) = \dfrac{b^2 a^{b+3} - (2 b^2 + 2b - 1)a^{b+2} + (b+1)^2 a^{b+1} - a^2 - a}{( a - 1 )^3} .$$ 
In our application to elliptic curves, terms up to $G_{12}(a, b)$ occur, excepting $G_{11}(a,b)$.

\subsection{The number of solutions} 
\label{subsection:numberofsolutions} 

Given two integers $ a $ and $ w $ such that $ 0 \leq a < 2^w $, define $ \delta ( a , w ) $ as $$ \delta(a,w) := ( 2^w - 1 ) ( 2^w - a + 1 ) = 2^{2 w} - 2^w a + a - 1 .$$
We observe that:
$$\HW ( \delta ( a , w ) ) = \begin{cases} 2w, & a = 0, \\ w, & a \neq 0.\end{cases}$$
Consider the points with integer coordinates, contained in the $k$-dimensional cube $[0, t-1]^k$ and a function $f : [0, t-1]^k \cap \mathbb N^k \rightarrow \mathbb N$. Suppose that for all $\vec x \in [0, t-1]^k \cap \mathbb N^k $, one has that $0 \leq f(\vec x) < 2^w$. We stress the fact that $f$ has to be non-negative. 

Let $ v $ denote the function that maps each point $ \vec{x} \in \{ 0 , \dots , t - 1 \}^k $ into  $ x_1 + x_2 t  + \dots + x_k t ^{k - 1} $. Observe that $ v $ enumerates the points of $ \{ 0 , \dots , t  - 1 \}^k $ from $ 0 $ to $ t^k - 1 $.

Let $$ M  = \sum_{\vec{a} \in \{ 0 , \dots , t  - 1 \}^k} 2^{2 w  v ( \vec{a} )} \delta ( f (\vec{a} ) , w ) . $$
We observe that the binary representation of $M$ is the concatenation of the binary representations of the numbers $\delta(f(\vec a), w)$.

Let $ d $ denote the cardinality of the set $ \{ \vec{a} \in \{ 0 , \dots , t - 1 \}^k : f ( \vec{a} ) = 0 \} $. It follows that:
$$\HW(M) = 2wd + (t^k - d)w, $$
$$d =\HW(M) / w - t^k. $$
So if $M$ could be expressed by arithmetic term in $t$ and $w$, then the number of zeros of the function $f$ would be expressed by such a term. We call {\bf monomial simple in $x$} any expression of the shape:
$$m = c v_1^{x_1} \cdots v_k^{x_k} x_1^{u_1} \cdots x_k^{u_k}$$
where $u_1, \dots, u_k \geq 0$ and $v_1, \dots, v_k \geq 1$ are integers and $c$ is an integer. An exponential polynomial simple in $\vec x$ is a sum of simple monomials. We apply the identity: 
$$ \sum_{\vec{x} \in \{ 0 , \dots , t - 1 \}^k} x_1^{u_1} v_1^{x_1} \dots x_k^{u_k} v_k^{x_k} = \left ( \sum _{x_1 = 0}^{t-1} x_1^{u_1} v_1^{x_1} \right ) \cdots \left ( \sum _{x_k = 0}^{t-1} x_k^{u_k} v_k^{x_k} \right ) =$$ $$ = G_{u_1} ( v_1 , t - 1 ) \dots G_{u_k} ( v_k , t - 1 ) . $$ 

Let $P(\vec x) = \varepsilon  + m_1 (  \vec{x} ) + \dots + m_r (  \vec{x} )$, with $\varepsilon$ constant term. We suppose that $P(\vec x)$ is always non-negative. To count the number of zeros of $P(\vec x)$, we start expressing $M$ as an arithmetic term:
 $$ M = \sum_{\vec{x} \in \{ 0 , \dots , t  - 1 \}^k} 2^{2 w  v ( \vec{x} )} \delta ( P (  \vec{x} ) , w  ) = $$ $$ \sum_{\vec{x} \in \{ 0 , \dots , t  - 1 \}^k} 2^{2 w  v ( \vec{x} )} \delta ( \varepsilon  + m_1 (  \vec{x} ) + \dots + m_r (  \vec{x} ) , w  ) = $$ $$ \sum_{\vec{x} \in \{ 0 , \dots , t  - 1 \}^k} 2^{2 w  v ( \vec{x} )} ( 2^{w } - 1 ) ( 2^{w } - \varepsilon  - m_1 (  \vec{x} ) - \dots - m_r (  \vec{x} ) + 1 ) = $$ $$ \sum_{\vec{x} \in \{ 0 , \dots , t  - 1 \}^k} 2^{2 w  v ( \vec{x} )} ( 2^{w } - 1 ) ( 2^{w } - \varepsilon + 1 ) + $$ $$ + \sum_{j = 1}^r \sum_{\vec{x} \in \{ 0 , \dots , t  - 1 \}^k} 2^{2 w  v ( \vec{x} )} ( 2^{w } - 1 ) ( - m_j (  \vec{x} ) ) . $$

The contribution of an exponential monomial $m =   c v_1^{x_1} \cdots v_k^{x_k} x_1^{u_1} \cdots x_k^{u_k}$ to $M$ is an expression of the shape:
$$A(m, k, t, w) = \sum_{\vec{x} \in \{ 0 , \dots , t  - 1 \}^k} 2^{2 w  v ( \vec{x} )} ( 2^{w } - 1 ) ( - m (  \vec{x} ) )  $$ $$ = - ( 2^{w } - 1 ) \,\, c \,\, G_{u_1} ( 2^{ 2 w } v_1 , t  - 1 ) G_{u_2} ( 2^{ 2 w t } v_2 , t  - 1 )\dots G_{u_k} ( 2^{ 2 w  {t }^{k - 1}} v_k , t  -1 )$$
which is an arithmetic term in $t$ and $w$. If the exponential polynomial contains the free term $\varepsilon \in \mathbb Z$, 
meaning that $v_1 = \dots = v_k = 1$ and $u_1 = \dots = u_k = 0$, its contribution simplifies to: 
$$C(\varepsilon , k, t, w) =  \sum_{\vec{x} \in \{ 0 , \dots , t  - 1 \}^k} 2^{2 w  v ( \vec{x} )} ( 2^{w } - 1 ) ( 2^{w } - \varepsilon + 1 ) = $$ $$ = ( 2^{w } - \varepsilon  + 1 ) ( 2^{2 w  {t }^k } - 1 ) / ( 2^{w } + 1 ).$$
Observe that even for $\varepsilon = 0$, the contribution of the free term is not zero. It follows that for  exponential polynomials simple in $\vec x$, the quantity $M$ defined above can be expressed by an arithmetic term in $t$ and $w$. 

We have shown that {\it if a non-negative exponential polynomial function simple in $\vec x$ is defined on the points with integer coordinates inside a $k$-dimensional cube $\{0, 1, \dots, t-1\}^k$ and is strictly bounded by $2^w$, then $d$, the number of zeros inside the cube, will be expressed by an arithmetic term in $t$ and $w$.} 

In practice, the various coefficients and exponential bases $c$ and $v_i$ from various exponential monomials will be arithmetic terms depending on some parameter tuple $\vec n = (n_1, \dots, n_s)$, {\bf but} the exponents $u_1, \dots, u_k$ will be {\bf constants} in every monomial. In this case, one computes appropriate bounds $t(\vec n)$ and $w(\vec n)$ such that all interesting zeros are in the cube $[0, t(n) -1]^k$ and the positive exponential polynomial function is bounded by $2^{w(\vec n)}$, and the number of natural number tuples, which are solutions of the equation $f(\vec x) = 0$, will be given by an arithmetic term $d(\vec n)$.

\section{Algebraic sets in residue classes modulo n}\label{sets}

Consider a system $S$ of polynomial equations with integer coefficients:
$$
    F_1(\vec \lambda, \vec x) = 0,
     \ldots\ldots, 
    F_k(\vec \lambda, \vec x) = 0,
$$
where $\vec \lambda = (\lambda_1, \dots, \lambda_s)$ are parameters and $\vec x = (x_1, \dots, x_m)$ are unknowns. For given values of the parameters $\vec \lambda \in \mathbb Z^s$ and for $n \geq 1$, consider the set:
$$S_n(\vec \lambda) = \{(x_1, \dots, x_m) \in (\mathbb Z / n \mathbb Z)^m \,|\, F_1(\vec \lambda, \vec x) = \dots = F_k(\vec \lambda, \vec x) = 0 \textrm{ in $\mathbb Z/ n\mathbb Z$} \}. $$ 
We will show that there is an arithmetic term $f(\vec \lambda , n)$ such that for all $\vec \lambda \in \mathbb Z^s$ and all $n \geq 1$,
$$f(\vec \lambda, n) = |S_n(\vec \lambda)|. $$ 
Consider the Diophantine equation $F(\vec \lambda, n, \vec x, \vec y, \vec z) = 0$ defined as:
$$\sum _{i = 1}^k {\left ( {F_i(\vec \lambda, \vec x)}^2 - n^2 y_i^2 \right )}^2 + \sum _{j = 1}^m {(x_j + z_j - n + 1)}^2 = 0.$$
For given  $\vec \lambda \in \mathbb Z ^s$ and $n \geq 1$, define the set:
$$T_n(\vec \lambda) = \{(x, y, z) \in \mathbb{N}^{m} \times \mathbb{N}^{k} \times \mathbb{N}^{m} \,|\, F(\vec \lambda, n, \vec x, \vec y, \vec z) = 0 \}. $$ 
\begin{lemma}\label{first}
    Given the system $S$, for all $\vec \lambda \in \mathbb Z ^s$ and $n \geq 1$, 
    $$|S_n(\vec \lambda)| = |T_n(\vec \lambda)| .$$
\end{lemma}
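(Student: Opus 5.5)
The plan is to construct an explicit bijection between $T_n(\vec \lambda)$ and $S_n(\vec \lambda)$, by reading off what the single equation $F(\vec \lambda, n, \vec x, \vec y, \vec z) = 0$ says about each of its summands. Fix $\vec \lambda \in \mathbb Z^s$ and $n \geq 1$. For $(\vec x, \vec y, \vec z) \in \mathbb N^m \times \mathbb N^k \times \mathbb N^m$, every summand of $F$ is the square of an integer. Hence $F = 0$ holds exactly when each summand vanishes:
$$F_i(\vec \lambda, \vec x)^2 = n^2 y_i^2 \quad (1 \leq i \leq k), \qquad x_j + z_j = n - 1 \quad (1 \leq j \leq m).$$

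Next I will translate these conditions. Since $y_i \geq 0$ and $n \geq 1$, the first condition is equivalent to $|F_i(\vec \lambda, \vec x)| = n y_i$. So it holds for some $y_i \in \mathbb N$ if and only if $n \mid F_i(\vec \lambda, \vec x)$, and in that case $y_i = |F_i(\vec \lambda, \vec x)|/n$ is uniquely determined. Since $z_j \geq 0$, the second condition is equivalent to $0 \leq x_j \leq n-1$, and then $z_j = n - 1 - x_j$ is uniquely determined. Therefore
$$T_n(\vec \lambda) = \left\{ \left(\vec x,\ \left(|F_i(\vec \lambda,\vec x)|/n\right)_{i},\ (n-1-x_j)_{j}\right) : \vec x \in \{0,\dots,n-1\}^m,\ n \mid F_i(\vec \lambda, \vec x) \ \forall i \right\}.$$

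Finally I will define $\Phi : T_n(\vec \lambda) \to S_n(\vec \lambda)$ by sending $(\vec x, \vec y, \vec z)$ to the tuple of residue classes $(x_1 \bmod n, \dots, x_m \bmod n)$. This is well defined because $F_i$ has integer coefficients, so reducing $F_i(\vec \lambda, \vec x)$ modulo $n$ commutes with reducing $\vec x$ modulo $n$, and each $F_i(\vec \lambda, \vec x) \equiv 0 \pmod n$. The map $\Phi$ is injective: the $\vec x$-part consists of representatives in $\{0, \dots, n-1\}$, so distinct $\vec x$ give distinct residues, and $\vec y, \vec z$ are determined by $\vec x$. It is surjective: every point of $S_n(\vec \lambda)$ has a unique representative $\vec x \in \{0,\dots,n-1\}^m$, this $\vec x$ satisfies $n \mid F_i(\vec \lambda,\vec x)$, and so it lifts to an element of $T_n(\vec \lambda)$ via the displayed description.

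No step is a serious obstacle. The only points requiring care are the uniqueness of $y_i$, which uses $y_i \in \mathbb N$ to remove the sign ambiguity of $\pm n y_i$, and the fact that the case $n = 1$ still works: there $x_j = z_j = 0$ and $y_i = |F_i(\vec \lambda, \vec 0)|$, giving exactly one point, matching $|(\mathbb Z/1\mathbb Z)^m| = 1$.
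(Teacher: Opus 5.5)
Your proposal is correct and follows essentially the paper's approach: a bijection between $S_n(\vec\lambda)$ and $T_n(\vec\lambda)$ obtained by taking representatives in $\{0,\dots,n-1\}$, with $y_i = |F_i(\vec\lambda,\vec x)|/n$ and $z_j = n-1-x_j$ forced. The paper writes the map in the opposite direction, $S_n \to T_n$, while yours goes $T_n \to S_n$ and first characterizes $T_n$ explicitly. That makes precise the step the paper only asserts, namely that every solution of $F=0$ arises this way (each summand vanishes, and $z_j \ge 0$ forces $x_j \le n-1$).
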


\begin{proof} Fix the values of $n \geq 1$ and $\vec \lambda \in \mathbb Z^s$. Consider the application $r : S_n(\vec \lambda) \rightarrow T_n(\vec \lambda)$ given by:
$$r(\vec x) = (\vec x, \vec y, \vec z),$$
where every class $x_j \bmod n$ corresponds to the unique representative $x_j$ with $0 \leq x_j \leq n-1$, every $y_i$ has the value:
$$y_i = \frac{|F_i (\vec \lambda, \vec x)|}{n},$$
and every $z_j$ has the value:
$$z_j = n - 1 - x_j.$$
If $\vec x \in S_n(\vec \lambda)$ then $r(x) \in T_n(\vec \lambda)$. This function is one-to-one and onto because the values of $\vec y$ and $\vec z$ are uniquely determined by $\vec x$. 
\end{proof}

In the following lines we will use the arithmetic term:
$$ (c \bmod n) := c - n \cdot \left \lfloor \frac{c}{n} \right \rfloor $$
whose value is the smallest non-negative integer in the same residue class modulo $n$. We observe that this term makes sense also for negative numbers $c$, provided that  $c \bmod n$ is positive by definition. 

Let $G_i(\vec \lambda, n, x)$ be the expression defined as follows. 
\begin{enumerate}
    \item In the polynomial $F_i(\vec \lambda, \vec x)$, every coefficient $c \in \mathbb Z$ is replaced by the term $(c \bmod n)$. 
    \item In the resulting polynomial, every parameter $\lambda_{e}$ is replaced by the term $(\lambda_{e} \bmod n)$. 
\end{enumerate}
Consider the Diophantine equation $G(\vec \lambda, n, \vec x, \vec y, \vec z) = 0$ defined as:
$$\sum _{i = 1}^k {\left ( {G_i(\vec \lambda,n, \vec x)}^2 - n^2 y_i^2 \right )}^2 + \sum _{j = 1}^m {(x_j + z_j - n + 1)}^2 = 0.$$
For given $\vec \lambda \in \mathbb Z^{s}$ and $n \geq 1$, define the set:
$$U_n(\vec \lambda) = \{(\vec x, \vec y, \vec z) \in \mathbb N^{2m + k} \,|\, G(\vec \lambda, n, \vec x, \vec y, \vec z) = 0 \}. $$ 
\begin{lemma}
    Given the system $S$, for all $\vec \lambda \in \mathbb Z ^s$ and $n \geq 1$, 
    $$|S_n(\vec \lambda) | = |U_n(\vec \lambda)| .$$
\end{lemma}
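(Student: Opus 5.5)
The plan is to reduce to the previous Lemma by showing that $U_n(\vec \lambda)$ and $T_n(\vec \lambda)$ have the same cardinality. Equivalently, I would build a bijection $S_n(\vec \lambda) \to U_n(\vec \lambda)$ just as in the proof of Lemma~\ref{first}. The key observation is that $G_i(\vec \lambda, n, \vec x) \equiv F_i(\vec \lambda, \vec x) \pmod n$ for every integer tuple $\vec x$. This holds because $G_i$ arises from $F_i$ by replacing each coefficient $c$ by $(c \bmod n) \equiv c$ and each parameter $\lambda_e$ by $(\lambda_e \bmod n) \equiv \lambda_e$. Reduction modulo $n$ is a ring homomorphism $\mathbb Z \to \mathbb Z/n\mathbb Z$, so the two polynomial expressions agree in $\mathbb Z/n\mathbb Z$. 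In addition, $G_i(\vec \lambda, n, \vec x) \geq 0$ whenever $\vec x \in \mathbb N^m$, since all coefficients and substituted parameter values are non-negative. This non-negativity is presumably the purpose of the construction.

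Next, I would unpack the equation $G = 0$. It is a sum of squares of integers, so it vanishes exactly when every summand vanishes. The summands $(x_j + z_j - n + 1)^2$ vanish iff $z_j = n-1-x_j$. Since $z_j \in \mathbb N$, this forces $0 \le x_j \le n-1$, so $x_j$ is the canonical representative of its residue class and $z_j$ is determined by it. The summands $(G_i^2 - n^2 y_i^2)^2$ vanish iff $G_i = \pm n y_i$. Because $G_i \ge 0$ and $y_i \ge 0$, this is equivalent to $G_i = n y_i$. Such a $y_i \in \mathbb N$ exists iff $n \mid G_i$, and then $y_i = G_i/n$ is unique. By the congruence above, $n \mid G_i$ iff $F_i(\vec\lambda,\vec x) \equiv 0 \pmod n$.

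Hence the map $\vec x \mapsto (\vec x, \vec y, \vec z)$, with $x_j \in \{0,\dots,n-1\}$ the representative, $y_i = G_i(\vec\lambda,n,\vec x)/n$ and $z_j = n-1-x_j$, sends $S_n(\vec \lambda)$ into $U_n(\vec \lambda)$. It is injective since the projection onto $\vec x$ recovers the point. It is surjective by the analysis in the previous paragraph: any element of $U_n(\vec\lambda)$ has $\vec x$ in the cube with all $F_i$ vanishing modulo $n$, and its $\vec y, \vec z$ are the forced values. This gives $|S_n(\vec\lambda)| = |U_n(\vec\lambda)|$.

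The only step requiring any care is the congruence $G_i \equiv F_i \pmod n$ together with the sign argument. I would state the congruence explicitly via the homomorphism property applied monomial by monomial, since this is what allows the $\pm$ to be discarded. There is one edge case to note: for $n = 1$ every residue is $0$, $G_i \equiv 0$, and both sets have exactly one element, so the argument goes through uniformly.
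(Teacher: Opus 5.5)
Your proof is correct and follows the paper's approach: the paper just says this is Lemma~\ref{first} applied to the system reduced modulo $n$, and you spell out the step it leaves implicit. That step is that $G_i \equiv F_i \pmod n$, so the reduced system defines the same set $S_n(\vec\lambda)$; you then repeat the bijection $\vec x \mapsto (\vec x, \vec y, \vec z)$. Note that non-negativity of $G_i$ is not actually needed, since taking $y_i = |G_i|/n$ as in Lemma~\ref{first} works in general. This matters because the paper's own elliptic-curve instance keeps minus signs in $G$, so $G$ can be negative there.
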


\begin{proof} This is again Lemma \ref{first} for the system of equations reduced modulo $n$. \end{proof}

\begin{lemma}\label{lem:sufficientt}
   There is an arithmetic term $t(n)$ such that given the system $S$, for all $\vec \lambda \in \mathbb Z^s$ and for all $n \geq 1$, 
   $$U_n(\vec \lambda ) \subset [0, t(n)]^{2m + k}.$$
\end{lemma}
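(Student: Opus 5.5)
Since $G=0$ is a sum of squares of integers, every point $(\vec x,\vec y,\vec z)\in U_n(\vec\lambda)$ satisfies each summand $=0$ separately. From $x_j+z_j-n+1=0$ with $x_j,z_j\ge 0$ we get $0\le x_j,z_j\le n-1$. From $G_i(\vec\lambda,n,\vec x)^2=n^2y_i^2$ with $y_i\ge 0$ we get $y_i=|G_i(\vec\lambda,n,\vec x)|/n$. Because every coefficient and every parameter has been replaced by its residue $(c\bmod n)$ or $(\lambda_e\bmod n)$, every monomial of $G_i$ is a product of numbers in $[0,n-1]$. Hence $G_i\ge 0$ on $U_n(\vec\lambda)$, and $|G_i|$ is bounded purely in terms of $n$ and the fixed shape of $F_i$. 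This uniformity in $\vec\lambda$ is exactly why the reduction modulo $n$ of the coefficients and parameters was introduced.

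Concretely, write $F_i$ as a polynomial in $(\vec\lambda,\vec x)$ with integer coefficients. Let $N_i$ be its number of monomials and $D_i$ its total degree in $(\vec\lambda,\vec x)$; both are constants depending only on the system $S$. Each monomial of $G_i$ has the form $(c\bmod n)\prod(\lambda_e\bmod n)^{a_e}\prod x_j^{b_j}$, so its value lies in $[0,(n-1)^{1+D_i}]$. Therefore $0\le G_i\le N_i (n-1)^{D_i+1}\le N_i n^{D_i+1}$, and $y_i\le N_i n^{D_i}$. Setting $N=\max_i N_i$ and $D=\max_i D_i$, I will take
$$t(n)=N\,n^{D+1}.$$
This is an arithmetic term in $n$, indeed a polynomial with constant coefficients fixed by $S$. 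It dominates every $x_j\le n-1$, every $z_j\le n-1$ and every $y_i\le Nn^{D}$. So $U_n(\vec\lambda)\subset[0,t(n)]^{2m+k}$ for all $\vec\lambda$ and all $n\ge 1$.

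There is no real obstacle; the only care needed is bookkeeping.
\begin{itemize}
\item The replacement in steps 1--2 must make all summands of $G_i$ non-negative and bounded by powers of $n$, so that $t$ depends on $n$ alone and not on $\vec\lambda$. This holds because both coefficients and parameters are reduced to $[0,n-1]$.
\item The degree bound must count the parameter exponents as well as the $x$-exponents.
\item For the edge case $n=1$, every reduced coefficient is $0$, so $G_i=0$ and $y_i=0$, and $t(1)=N\ge 0$ still works. If some $N$ could be $0$, I would replace it by $N+1$ to be safe.
\end{itemize}
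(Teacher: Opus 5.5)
Your proof is correct and is essentially the paper's argument. The summands $(x_j+z_j-n+1)^2$ force $x_j,z_j\le n-1$, and the summands $(G_i^2-n^2y_i^2)^2$ give $y_i=|G_i|/n$. Bounding every reduced coefficient, reduced parameter and variable by $n$ then yields a $\vec\lambda$-independent polynomial bound on $y_i$; your $N_i n^{D_i}$ is a coarser majorant of the paper's $H_i(n)/n$. After that, $t(n)$ is any polynomial dominating all these bounds, such as the paper's $\sum_i H_i(n)/n+n$ or your $N n^{D+1}$.

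Your side claim $G_i\ge 0$ depends on reading the replacement $c\mapsto (c\bmod n)$ literally. The paper's elliptic-curve example, and its step replacing subtraction signs by addition signs, keep the minus signs instead. You only use a bound on $|G_i|$, and that bound survives either reading (with $n$ in place of $n-1$), so nothing hinges on it.
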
 

\begin{proof} In the equation $G(\vec \lambda, n, \vec x, \vec y, \vec z) = 0$, it is clear that the $x$-components and the $z$-components of every solution satisfy $x_j < n$ and $z_j < n$. We look now at the components $y_i$. We know that
$$y_i = \frac{|G_i(\vec \lambda, n, \vec x)|}{n}.$$
We perform the following actions on $G_i(\vec \lambda, n, \vec x)$:
\begin{enumerate}
    \item All subtraction signs are replaced with addition signs.
    \item All coefficients $(c \bmod n)$ are replaced by $n$, as we know that $0 \leq c \bmod n < n$.
    \item All occurrences of a parameter in the form $(\lambda_{e} \bmod n) $ are replaced by $n$, as we know that $0 \leq \lambda_e \bmod n < n$.
    \item All variables $x_j$ are replaced by $n$, as we know that as components of a natural solution, one has $x_j < n$. 
\end{enumerate} 
The result is a polynomial $H_i(n)$ with the property that $n \,|\,H_i(n)$ and
$$y_i \le \frac{H_i(n)}{n},$$
for every component $y_i$ of a solution. Now one chooses $t(n)$ equal to the sum of all these bounds:
$$t(n) = \frac{H_1(n)}{n}+ \dots + \frac{H_k(n)}{n} + n.$$
Of course $t(n)$ can be taken $1 + \max(H_1(n), \dots, H_k(n), n)$ and the $\max$ operation can be also expressed by an arithmetic term, but this expression is more complicated. 
\qed 

\begin{lemma}
    Given the system $S$, there is a constant $K \in \mathbb N$ such that for all parameters $\vec \lambda_1, \dots, \vec \lambda_k \in \mathbb Z$ and for all $n \geq 1$, 
    $$0 \leq G(\vec \lambda, n, \vec x, \vec y, \vec z) < 2^{n+K},$$
    for all $(\vec x, \vec y, \vec z) \in [0, t(n)]^{2m + k}$. 
\end{lemma} 

{\bf Proof}: For the function $G(\vec \lambda, n, \vec x, \vec y, \vec z)$ we apply almost the same steps as in the Lemma \ref{lem:sufficientt}, as follows: 
\begin{enumerate}
    \item All subtraction signs are replaced with addition signs.
    \item All coefficients $(c \bmod n)$ are replaced by $n$.
    \item All occurrences of a parameter $(\lambda_{e} \bmod n) $ are replaced by $n$.
    \item All variables $x_j$, $y_i$ and $z_j$ are replaced by $t(n)$. 
\end{enumerate} 
The result is a polynomial $H(n)$ such that for all parameters $\vec \lambda \in \mathbb Z^s$, for all $n \geq 1$, and for all $(\vec x, \vec y, \vec z) \in [0, t(n)]^{2m+k}$, 
$$0 \leq G(\vec \lambda,n, \vec x, \vec y, \vec z) \leq H(n). $$
But as $H(n)$ is a polynomial, there is always a constant $K$ such that for all $n \in \mathbb N$, 
$$H(n) < 2^{n+K}.$$
Indeed, $\lim_{n \rightarrow \infty} 2^n / |H(n)| = + \infty$, so there is some $N_0$ such that the inequality is surely true for $n \geq N_0$. For sufficiently large $K$, $2^K \times 2^n > H(n)$ for all $n \in \mathbb N$. 
\end{proof}

By putting all pieces together, we get the following theorem:

\begin{theorem}\label{main} There is an arithmetic term $f(\vec \lambda, n)$ such that
    for the system of polynomial equations $S$, for every values of the parameters $\vec \lambda \in \mathbb Z^s$ and for every $n \geq 1$, 
    $$f(\vec \lambda, n) = |S_n(\vec \lambda)|. $$
\end{theorem}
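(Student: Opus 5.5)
The plan is to assemble the hypercube method of Section~\ref{section2} with the three lemmas above. By the second lemma, $|S_n(\vec \lambda)| = |U_n(\vec \lambda)|$. So it suffices to count the natural zeros of $G(\vec \lambda, n, \vec x, \vec y, \vec z)$.

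By Lemma~\ref{lem:sufficientt}, all these zeros lie in the cube $\{0, \dots, t(n)\}^{2m+k}$. We therefore apply the counting procedure with side length $T := t(n)+1$, which is an arithmetic term. By the last lemma, $G$ is non-negative and bounded by $2^{n+K}$ on that cube. Hence we choose $w := n + K$, again an arithmetic term.

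The next step is to expand $G$ as a polynomial in the unknowns $\vec x, \vec y, \vec z$. Its coefficients are polynomial expressions, with integer coefficients, in the terms $(c \bmod n)$, $(\lambda_e \bmod n)$ and $n$. The exponents of the unknowns are constants fixed by the system $S$ alone, independent of $\vec \lambda$ and $n$. Each monomial is thus simple in the sense of Section~\ref{section2}, with all bases $v_i = 1$. This is exactly the situation in the final remark of that section: coefficients are arithmetic terms in the parameters, and exponents are constants.

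One subtlety needs care. The coefficient of a monomial may be negative, which happens for instance because of the terms $-n^2 y_i^2$ or $-(n-1)$. The construction of $M$ only needs $G(\vec x) \ge 0$ pointwise, which we have, while the individual monomial coefficients $c$ may be arbitrary integers. The sum
$$M = C(\varepsilon, 2m+k, T, w) + \sum_j A(m_j, 2m+k, T, w)$$
therefore still equals the non-negative number whose binary expansion is the concatenation of the $\delta(G(\vec a), w)$.

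To write it as an arithmetic term over $\mathbb N$, I would split each coefficient into its positive and negative parts. I would collect all positive contributions into one term $M^+$ and all negative ones into $M^-$, using $\dot-$ only once at the end: $M = M^+ \dot- M^-$. Each $G_{u}(2^{2wT^{i-1}}, T-1)$ is an arithmetic term by the recursion~\eqref{TermG}, since the differentiation there is carried out symbolically once for the finitely many fixed $u$ that occur.

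Finally, set
$$f(\vec \lambda, n) = \HW(M)/w - T^{2m+k},$$
which counts the zeros of $G$ in the cube, that is $|U_n(\vec \lambda)| = |S_n(\vec \lambda)|$.

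I expect the main obstacle to be the bookkeeping that turns everything into a genuine term. This includes the sign handling just described and the fact that $(c \bmod n)$ and $(\lambda_e \bmod n)$ for negative integers must be realised by $\lfloor\cdot\rfloor$ with the paper's convention. The cleanest route seems to be adding a sufficiently large multiple of $n$ before reducing, or treating $\vec \lambda$ via sign and absolute value; either keeps the whole expression inside the arithmetic operations allowed.
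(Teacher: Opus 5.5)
Your proposal is correct and is essentially the paper's proof. You pass from $S_n(\vec\lambda)$ to the natural zeros of $G$ via $|S_n(\vec\lambda)| = |U_n(\vec\lambda)|$, take the cube $[0,t(n)]^{2m+k}$ and $w(n)=n+K$ from the two bounding lemmas, expand $G$ into monomials with fixed exponents whose coefficients are terms in $n$ and the residues $(\lambda_e \bmod n)$, and read off the count as $\HW(M)/w - (t(n)+1)^{2m+k}$. Your extra bookkeeping (writing $M = M^+ \dot{-} M^-$, putting the $G_u$ in closed form, and handling residues of negative integers) only makes explicit what the paper leaves implicit.
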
 

{\bf Proof}: We apply the hypercube method from Section \ref{section2} for the equation:
$$G(\vec \lambda, n, \vec x, \vec y, \vec z) = 0$$
on the cube $[0, t(n)]^{2m+k}$ and using the exponential bound $2^{w(n)}$ where $w(n) = n+K$. The exponential polynomial $G(\vec \lambda, n, \vec x, \vec y, \vec z)$ is non-negative, as a sum of squares. We observe that:
$$G = \sum _\alpha C_\alpha(n, \lambda_1 \bmod n, \dots, \lambda_s \bmod n){\vec x}^{\vec \alpha_x}{\vec y}^{\vec \alpha_y}{\vec z}^{\vec \alpha_z}.$$
The variables are eliminated by summation and are replaced by $G$-functions depending on $n$.  \qed 

For the special case $n=1$, one sees whether the $k$-vector $(0,0,\dots, 0)$ belongs to the algebraic set. This is solved by observing whether all the free terms of the polynomials $F_i(\vec \lambda, \vec x)$ are zero or not. 

\section{Application to elliptic curves} 

We apply the method developed in Section \ref{sets} for the family of elliptic curves in short Weierstra{\ss } form
$$x_2^2 = x_1^3 + Ax_1 + B$$
with parameters $A, B \in \mathbb Z$ in the rings of remainder classes $\mathbb Z / n \mathbb Z$ with $n \geq 1$. We will construct an arithmetic term $f(A, B, n)$ representing the number of points $(x_1,x_2) \in \mathbb Z / n \mathbb Z \times \mathbb Z / n \mathbb Z$ which satisfy this equation. We have seen that we must count the solutions $(x_1, x_2, y, z_1, z_2) \in \mathbb N^5$ of the equation:
$$\left ( \left (x_2^2 -  x_1^3 - (A \bmod n) x_1 - (B \bmod n) \right )^2 - n^2 y^2\right )^2 + (x_1 + z_1 - n + 1)^2 + $$ $$ + (x_2 + z_2 - n + 1)^2 = 0.$$
Call the left-hand of the equation above $E(A, B, n, x_1, x_2, y, z_1, z_2) $. 

We observe that for every solution $(x_1, x_2, y, z_1, z_2) \in \mathbb N^5$, one has $0 \leq x_1, x_2, z_1, z_2 < n$. For $y$ we write:
$$y = \frac{|x_2^2 - x_1^3 - (A \bmod n) x_1 - (B \bmod n)|}{n}\le $$ $$\le \frac{x_2^2 + x_1^3 + (A \bmod n) x_1 + (B \bmod n)}{n} <$$
$$< \frac{n^2 + n^3 + n^2 + n}{n} = (n+1)^2.$$
As always $n < (n+1)^2$ for all $n \in \mathbb N$, we define $t(n) = (n+1)^2$. 

In order to find bounds for $E(A, B, n, x_1, x_2, y, z_1, z_2) $ as function defined on $[0, t(n)]^5$, we use a similar argument. For all $0 \leq x_1, x_2, y, z_1, z_2 \leq t(n)$ one has:
$$0 \leq E(A, B, n, x_1, x_2, y, z_1, z_2) \leq$$ $$\leq 
\left ( (t(n)^2 + t(n)^3 + t(n)^2 + t(n))^2 + n^2 t(n)^2 \right )^2 + 2 (2 t(n) + n - 1)^2.$$
Call the expression from the right-hand side of the second inequality $P(n)$. By direct computation we observe that for all $0 \leq n \leq 1000$,
$$P(n) < 2^{n + 90}$$
and it is easy to show that this inequality is true for all $n \in \mathbb N$, as the function $2^{n+90} / P(n)$ is increasing for $n > 1000$. So we define $w(n) = n + 90$. 

If we expand the expression $E(A, B, n, x_1, x_2, y, z_1, z_2)$, we get a sum of $49$ monomials in the variables $(x_1, x_2, y, z_1, z_2)$.

We observe that all the components of any solution are strictly smaller than $t(n) = (n+1)^2$, so we can take $t(n) - 1$ as upper limit of the Mazzanti summation. Summing up, 
we find the following:

\begin{theorem}
    The number of points of the affine algebraic curve defined by the parametric family:
    $$ x_2^2 = x_1^3 + Ax_1 + B$$
    with $A, B \in \mathbb Z$  (classically called short Weierstra{\ss } form) in $\mathbb Z / n \mathbb Z \times \mathbb Z / n \mathbb Z$ for $n \geq 1$ is given by the evaluation in $(A, B, n)$ of an arithmetic term of the shape:
    $$f(A, B, n) = \frac{\HW(M(A, B, n))}{n + 90} - (n+1)^{10}.$$ 
    Here the term $M(A, B, n)$ is a sum between the contribution of the free term of the equation (given above) and other $48$ sub-terms, any of them being the product of a coefficient from the Diophantine equation and five $G$-functions. 
\end{theorem}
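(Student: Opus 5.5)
The plan is to specialize Theorem \ref{main} to the system consisting of the single equation $F_1(A,B,x_1,x_2)=x_2^2-x_1^3-Ax_1-B$, so $k=1$, $m=2$, $s=2$, and then make the hypercube method of Section \ref{section2} explicit. First, by the two lemmas of Section \ref{sets} applied to the reduced polynomial, the number of points of the curve in $(\mathbb Z/n\mathbb Z)^2$ equals the number of natural solutions $(x_1,x_2,y,z_1,z_2)\in\mathbb N^5$ of $E(A,B,n,x_1,x_2,y,z_1,z_2)=0$. The bijection is $\vec x\mapsto(\vec x,|F_1|/n,n-1-\vec x)$, with $x_j$ the least non-negative representatives. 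Here we use that $A\bmod n$ and $B\bmod n$ are non-negative and congruent to $A$ and $B$.

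Second, I localize the solutions in a cube. The bound computed just before the statement gives $x_j,z_j\le n-1<(n+1)^2$ and $y<(n+1)^2$. Hence every solution lies in $\{0,\dots,t-1\}^5$ with $t=(n+1)^2$, so $t^k=(n+1)^{10}$.

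Third, I need $0\le E<2^{w}$ on this cube, with $w=n+90$. Non-negativity is clear because $E$ is a sum of squares. The upper bound $E\le P(n)$ is obtained as before, by replacing minus signs with plus and bounding every entry by $t(n)$. The only real obstacle is proving $P(n)<2^{n+90}$ for all $n$. Here $P$ has degree $24$ in $n$, since $t^3$ squared and then squared again gives degree $24$. My approach is to check $n\le 1000$ directly, as the text does. For $n>1000$ I would compare ratios: $P(n+1)/P(n)\le\big((n+2)/(n+1)\big)^{48}\cdot C$, and I would verify this is less than $2$ once $n$ is large. A cleaner route is a crude bound $P(n)\le c\,(n+1)^{24}$ with an explicit $c$, combined with $(n+1)^{24}<2^{n+90-\log_2 c}$ for $n\ge 1000$; the latter follows from $24\log_2(n+1)<n$ in that range.

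Finally, I expand $E$ into monomials in $(x_1,x_2,y,z_1,z_2)$. Their coefficients are polynomials in $n$, $A\bmod n$ and $B\bmod n$, and their exponents are fixed constants; there are $49$ of them, including the free term. I then apply the formula of Section \ref{section2}. The free term contributes $C(\varepsilon,5,t,w)$. Each other monomial $c\,x_1^{u_1}x_2^{u_2}y^{u_3}z_1^{u_4}z_2^{u_5}$ contributes $-(2^w-1)\,c\prod_{i=1}^5 G_{u_i}(2^{2wt^{i-1}},t-1)$, where $G_0$ is the ordinary geometric sum. Summing these gives $M(A,B,n)$, and $\HW(M)=2wd+(t^{5}-d)w$. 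This yields $d=\HW(M)/(n+90)-(n+1)^{10}$, which is the stated formula. The case $n=1$ is consistent with this: there $t=4$, and the count reduces to the trivial solution.
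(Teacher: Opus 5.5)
Your proposal is correct and follows the paper's own route. It uses the bijection of Section~3 onto the natural solutions of $E=0$, the cube bound $t=(n+1)^2$, and the estimate $0\le E\le P(n)<2^{n+90}$ (a direct check for $n\le 1000$ plus a growth argument beyond). It then expands $E$ into $49$ monomials and applies the hypercube formula, which yields $d=\HW(M)/(n+90)-(n+1)^{10}$.

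For $n>1000$, keep only your crude bound $P(n)\le c\,(n+1)^{24}$ (for instance $c=307$ works) together with $24\log_2(n+1)<n$. The ratio estimate with exponent $48$ and an unspecified constant $C$ is not right as stated: the degree is $24$, and any $C\ge 2$ would defeat the argument. It is not needed, though.
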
 

Consider that we keep the variable ordering $x_1, x_2, y, z_1, z_2$ to construct the term.

According to Subsection \ref{subsection:numberofsolutions}, the contribution of the free term $2 n^2 - 4 n  + 2 + (B \bmod n)^4 $ reads:
$$ C\left (2 n^2 - 4 n  + 2 + (B \bmod n)^4, 5, (n+1)^2, n+90 \right ) = $$ $$ = \frac{ \left ( 2^{n + 90} - 2n^2 + 4n - (B \bmod n)^4 -1 \right ) \left ( 2^{2(n+90)  {(n+1)}^{10} } - 1\right )} {2^{n+90 } + 1 }.$$ 
Also, for the monomial $-2 n^2 x_1^6 y^2$, the corresponding contribution reads: 
$$ A\left (-2 n^2 x_1^6 y^2, 5, (n+1)^2, n+90 \right ) = $$ $$= + 2 n^2 \left( 2^{n+90} - 1 \right)  G_{6} \left( 2^{ 2 (n + 90) } , n^2 + 2n \right) G_{0} \left ( 2^{ 2 (n+90) {(n+1)}^2 } , n^2 + 2n \right ) $$
$$ \times G_{2} \left ( 2^{ 2 (n+90) (n+1)^4 } , n^2 + 2n \right ) G_{0} \left ( 2^{ 2 (n+90) (n+1)^6 } , n^2 + 2n\right ) $$ $$ \times G_{0} \left ( 2^{ 2 (n+90) (n+1)^8 } , n^2 + 2n \right ).$$
By inspecting the exponents arising in the Diophantine equation, we observe that the following functions must be used to write down the term $M(A, B, n)$ and implicitly the term $f(A, B, n)$: $G_k$ with $k = 0, 1, 2, \dots, 9, 10$ and $G_{12}$.

\section{Conclusions}
\label{sec:conclusions}

We have shown that for every family of algebraic sets represented by a system of polynomial equalities with parameters $\vec \lambda \in \mathbb Z$, there is a closed formula $f(\vec \lambda, n)$ computing the exact number of points over the ring $\mathbb Z / n \mathbb Z$. These formulas are arithmetic terms with a fixed number of operations. Although the formulas are too complicated to be of practical use, their existence is interesting. We do not know how simple equivalent formulas can be. We treat also the example of elliptic curves in Weierstra{\ss } normal form.

\paragraph*{Acknowledgments.} This paper is dedicated to Professor Adrian Atanasiu on his 80-th birthday.


\begin{thebibliography}{12} 

\bibitem{istrateprunescushuniauniversal} G. Istrate, M. Prunescu and J. M. Shunia, \emph{Undecidability, Chaos and Universality in Arithmetic Terms.} arXiv e-prints (2026),
https://arxiv.org/abs/2606.09336

\bibitem{mazzanti2002plainbases} S. Mazzanti,
\emph{Plain bases for classes of primitive recursive functions}, Math. Logic Quart. \textbf{48} (2002), no.~1, 93--104.

\bibitem{marchenkov2007superposition} S.~S. Marchenkov,
\emph{Superpositions of elementary arithmetic functions},
J. Appl. Ind. Math. \textbf{1} (2007), 351--360.

\bibitem{matiyasevich1993hilbert} Y. Matiyasevich, \emph{Hilbert’s Tenth Problem.} MIT press (1993)

\bibitem{prunescu2024numbertheoreticfunctions} M. Prunescu and L. Sauras-Altuzarra, \emph{On the Representation of Number-Theoretic Functions by Arithmetic Terms} (2024),
https://arxiv.org/abs/2407.12928

\bibitem{prunescushuniaprimes} M. Prunescu and J. M. Shunia, \emph{On arithmetic terms expressing the prime-counting function and the n-th prime.} arXiv e-prints (2024),
https://arxiv.org/abs/2412.14594 

\bibitem{prunescushuniagcd} M. Prunescu and J. M. Shunia, \emph{Arithmetic-term representations for the
greatest common divisor.} INTEGERS, Electronic Journal of combinatorial number theory 26, A\#96, (2026)

\bibitem{robinson1952arithmetic} J. Robinson,
\emph{Existential definability in arithmetic},
Transactions of the American Mathematical Society \textbf{72} (1952), 437--449.

\end{thebibliography}
\end{document}